\def\endproofbox{\hskip 1.3em\hfill\rule{6pt}{6pt}}
\newenvironment{proof}%
{%
\noindent{\it Proof.}
}%
{%
 \quad\hfill\endproofbox\vspace*{2ex}
}
 \newcommand{\rmv}[1]{}
 \def\f{\varphi}
 \def\la{\lambda}
 \def\sC{\mathscr{C}}
\newtheorem{thm}{Theorem}[section]
\newtheorem{theorem}[thm]{Theorem}
\newtheorem{lemma}[thm]{Lemma}
\def\endproofbox{\hskip 1.3em\hfill\rule{6pt}{6pt}}
\newenvironment{proof}%
{%
\noindent{\it Proof.}
}%
{%
 \quad\hfill\endproofbox\vspace*{2ex}
}
\def\to{\rightarrow}
\def\bl{$\bullet$}
 \def\Pow{\textsf{Pow}}
 \author{Jeffery J. Beyerl
 \thanks{Department of Mathematics, Clemson University, Clemson, SC 29634-0975
 \mbox{ email: \textsf{jbeyerl@clemson.edu}}}
 \and
 Robert E. Jamison
 \thanks{Department of Mathematics, Clemson University, Clemson, SC 29634-0975
 \mbox{ email: \textsf{rejam@clemson.edu}}}
 \thanks{Affiliated Professor, University of Haifa}
 \and
 J. Bowman Light
  \thanks{Department of Mathematics, Clemson University, Clemson, SC 29634-0975
  \mbox{ email: \textsf{jlight@clemson.edu}}}
 }
 \title{Depth in Bingo Closure}
\begin{document}

     \date{}

 \maketitle


 \begin{abstract}

 Bingo is played on a $5\times 5$ grid. Take the 25 squares to be the ground set of a closure system in
 which square $s$ is {\em dependent} on a set $S$ of squares iff $s$ completes
 a {\em line} --- a row, column, or diagonal --- with squares that are already in $S$.
 The closure of a set $S$ is obtained via an iterative process in
 which, at each stage, the squares dependent upon the current state are added.
 In this paper we establish for the $n \times n$ Bingo board the
 maximum number of steps required in this closure process.
 \end{abstract}


    \section{Introduction}  \label{intro}

 If you are playing a game of Bingo and you find that you already have 4 squares on any row, column, or
 diagonal, then the fifth square on that line takes of special interest.
 If you get that fifth square, it {\em completes} a line and you
 win.  Convexity, and more generally closure, is based on the idea
 of filling gaps, closing holes, completing some set.  A common and
 convenient way to express this is in terms on of {\em dependency}.
 Here linear algebra is a model:  a subspace is a set of vectors
 which contains all vectors dependent on it.


 \begin{figure}
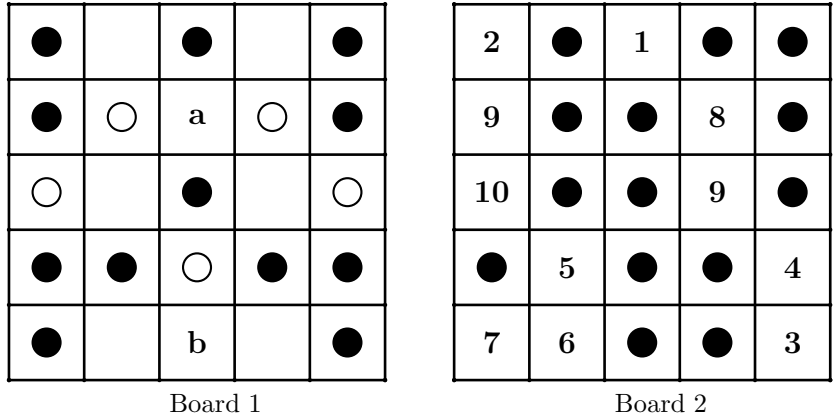

 \begin{center}
 \begin{tabular}{cc}


    \pspicture(5.5,5.5)
    \cnode[](0,0){0}{x00}       \rput(2.5,3.5){\large {\bf a}}     \rput(2.5,.5){\large {\bf b}}
    \cnode[](0,1){0}{x01}
    \cnode[](0,2){0}{x02}
    \cnode[](0,3){0}{x03}
    \cnode[](0,4){0}{x04}
    \cnode[](0,5){0}{x05}

    \cnode[fillstyle=solid,fillcolor=black](.5,4.5){.2}{a1}
    \cnode[fillstyle=solid,fillcolor=black](.5,1.5){.2}{a4}
    \cnode[fillstyle=solid,fillcolor=black](.5,.5){.2}{a5}
    \cnode[fillstyle=solid,fillcolor=black](1.5,1.5){.2}{b4}
    \cnode[fillstyle=solid,fillcolor=black](2.5,2.5){.2}{c3}
    \cnode[fillstyle=solid,fillcolor=black](3.5,1.5){.2}{d4}
    \cnode[fillstyle=solid,fillcolor=black](4.5,1.5){.2}{e4}
    \cnode[fillstyle=solid,fillcolor=black](4.5,.5){.2}{e5}
    \cnode[fillstyle=solid,fillcolor=black](4.5,4.5){.2}{a5}
    \cnode[fillstyle=solid,fillcolor=black](4.5,3.5){.2}{e4}

    \cnode[](1.5,3.5){.2}{b2}
    \cnode[](2.5,1.5){.2}{c4}
    \cnode[](3.5,3.5){.2}{d2}
    \cnode[](4.5,2.5){.2}{e3}

    \cnode[fillstyle=solid,fillcolor=black](.5,3.5){.2}{a2}
    \cnode[fillstyle=solid,fillcolor=black](2.5,4.5){.2}{c1}

                \cnode[](.5,2.5){.2}{a3}

    \cnode[](5,0){0}{y00}
    \cnode[](5,1){0}{y01}
    \cnode[](5,2){0}{y02}
    \cnode[](5,3){0}{y03}
    \cnode[](5,4){0}{y04}
    \cnode[](5,5){0}{y05}

    \cnode[](0,5){0}{t00}
    \cnode[](1,5){0}{t01}
    \cnode[](2,5){0}{t02}
    \cnode[](3,5){0}{t03}
    \cnode[](4,5){0}{t04}
    \cnode[](5,5){0}{t05}

    \cnode[](0,0){0}{s00}
    \cnode[](1,0){0}{s01}
    \cnode[](2,0){0}{s02}
    \cnode[](3,0){0}{s03}
    \cnode[](4,0){0}{s04}
    \cnode[](5,0){0}{s05}

    \ncline[linewidth=1pt]{x00}{y00}
    \ncline[linewidth=1pt]{x01}{y01}
    \ncline[linewidth=1pt]{x02}{y02}
    \ncline[linewidth=1pt]{x03}{y03}
    \ncline[linewidth=1pt]{x04}{y04}
    \ncline[linewidth=1pt]{x05}{y05}

    \ncline[linewidth=1pt]{s00}{t00}
    \ncline[linewidth=1pt]{s01}{t01}
    \ncline[linewidth=1pt]{s02}{t02}
    \ncline[linewidth=1pt]{s03}{t03}
    \ncline[linewidth=1pt]{s04}{t04}
    \ncline[linewidth=1pt]{s05}{t05}

 \endpspicture

 &


    \pspicture(5.5,5.5)
    \cnode[](0,0){0}{x00}
    \cnode[](0,1){0}{x01}
    \cnode[](0,2){0}{x02}
    \cnode[](0,3){0}{x03}
    \cnode[](0,4){0}{x04}
    \cnode[](0,5){0}{x05}

            \rput(2.5,4.5){\large {\bf 1}}
    \cnode[fillstyle=solid,fillcolor=black](1.5,4.5){.2}{b1}
    \cnode[fillstyle=solid,fillcolor=black](3.5,4.5){.2}{d1}
            \rput(.5,4.5){\large {\bf 2}}
    \cnode[fillstyle=solid,fillcolor=black](4.5,4.5){.2}{e1}

    \cnode[fillstyle=solid,fillcolor=black](1.5,3.5){.2}{a2}
    \cnode[fillstyle=solid,fillcolor=black](2.5,3.5){.2}{a2}
            \rput(4.5,.5){\large {\bf 3}}
            \rput(4.5,1.5){\large {\bf 4}}
    \cnode[fillstyle=solid,fillcolor=black](4.5,3.5){.2}{b2}
             \rput(1.5,1.5){\large {\bf 5}}

    \cnode[fillstyle=solid,fillcolor=black](1.5,2.5){.2}{b2}
        \rput(1.5,.5){\large {\bf 6}}
        \rput(.5,.5){\large {\bf 7}}
    \cnode[fillstyle=solid,fillcolor=black](2.5,2.5){.2}{c3}
    \cnode[fillstyle=solid,fillcolor=black](4.5,2.5){.2}{a2}
            \rput(3.5, 3.5){\large {\bf 8}}

        \rput(.5, 3.5){\large {\bf 9}}
        \rput(3.5,2.5){\large {\bf 9}}
    \cnode[fillstyle=solid,fillcolor=black](.5,1.5){.2}{d4}
    \cnode[fillstyle=solid,fillcolor=black](2.5,1.5){.2}{a2}
    \cnode[fillstyle=solid,fillcolor=black](3.5,1.5){.2}{b4}

    \cnode[fillstyle=solid,fillcolor=black](2.5,.5){.2}{d4}
    \cnode[fillstyle=solid,fillcolor=black](3.5,.5){.2}{b4}
        \rput(.5, 2.5){\large {\bf 10}}

    \cnode[](5,0){0}{y00}
    \cnode[](5,1){0}{y01}
    \cnode[](5,2){0}{y02}
    \cnode[](5,3){0}{y03}
    \cnode[](5,4){0}{y04}
    \cnode[](5,5){0}{y05}

    \cnode[](0,5){0}{t00}
    \cnode[](1,5){0}{t01}
    \cnode[](2,5){0}{t02}
    \cnode[](3,5){0}{t03}
    \cnode[](4,5){0}{t04}
    \cnode[](5,5){0}{t05}

    \cnode[](0,0){0}{s00}
    \cnode[](1,0){0}{s01}
    \cnode[](2,0){0}{s02}
    \cnode[](3,0){0}{s03}
    \cnode[](4,0){0}{s04}
    \cnode[](5,0){0}{s05}

    \ncline[linewidth=1pt]{x00}{y00}
    \ncline[linewidth=1pt]{x01}{y01}
    \ncline[linewidth=1pt]{x02}{y02}
    \ncline[linewidth=1pt]{x03}{y03}
    \ncline[linewidth=1pt]{x04}{y04}
    \ncline[linewidth=1pt]{x05}{y05}

    \ncline[linewidth=1pt]{s00}{t00}
    \ncline[linewidth=1pt]{s01}{t01}
    \ncline[linewidth=1pt]{s02}{t02}
    \ncline[linewidth=1pt]{s03}{t03}
    \ncline[linewidth=1pt]{s04}{t04}
    \ncline[linewidth=1pt]{s05}{t05}
    \endpspicture           \\

    Board 1 & Board 2 \\

 \end{tabular}
 \end{center}
  \caption{Bingo Closure.}
 \label{Bingen}
 \end{figure}


 Bingo closure, of course, can be considered on any $n\times n$ grid.
 A square $s$ is {\em dependent} on a set $S$ of squares
 iff $s$ completes a {\em line} --- a row, column, or diagonal --- with squares that are already in $S$.
 Let $X$ denote the $n^2$ squares on an $n \times n$ grid.
 Dependency defines a set map $\f: \Pow(X) \to \Pow(X)$ where
 $\f(S)$ is the set of all squares dependent on $S$.

 For example in Board 1 in Figure \ref{Bingen}, let $S$ be the set of 12 squares marked by solid black dots.
 Each of the 5 squares marked by open circles completes a line with
 squares in $S$. Thus $\f(S)$ is the set $A$ of squares marked by open
 circles.  Our definition of dependency has the undesirable
 peculiarity that a point {\em in} a set may fail to be dependent on
 that set. In particular it is the case in Baord 1 that no square in $S$ is dependent on $S$.  However, if
 we  look at $S \cup A$ both diagonals belong to $S \cup A$ and are
 also dependent on $S \cup A$.

 A set map is {\em isotone} provided $\f(A) \subseteq \f(B)$ whenever $A \subseteq B \subseteq X$.
 A set map is {\em expansive} provided $A \subseteq \f(A)$ for all $A \subseteq X$.
 A set map over a finite set $X$ that is isotone and expansive is {\em dolmatic\footnote{From the Turkish verb ``dolmak"
 which means ``to fill".}}. In general, the first step in computing the $\f$-closure is to produce
 the dolmatic extension $\f_*(A)$ of $\f$:
 \[
        \f_*(A) := A \cup \bigcup \{\f(S): S \subseteq A \}.
 \]
 In the case of Bingo closure the dependency map $\f$, as defined above,
 is isotone already so we can get its dolmatic extension as $\f_*(A) := A \cup \f(A)$.
 The importance of having a dolmatic function is that its iterates
 are always increasing and hence, in the finite case, eventually
 stabilize at the closure.  Here a set is {\em closed} iff it
 contains all points dependent on it, and the {\em closure} of a set
 $S$ is the smallest closed set containing $S$.  These ideas are
 discussed in a rather different setting in \cite{JN} and are
 treated in abstract generality in \cite{Haus, Jam}.
 In this paper, we answer the question for Bingo closure:

 \textsf{What is the maximum number of times the dolmatic map
        must be applied to obtain the closure of a set?}
  \rmv{
        \begin{quote}   What is the maximum number of times the dolmatic map
        must be applied to obtain the closure of a set?
        \end{quote}
        }

 \section{Maximum Depth for the Bingo Closure}

 Throughout the rest of this paper $X$ will denote set of $n^2$ squares on an $n \times n$ grid.
 The closure is the Bingo closure described above.  The Bingo
 closure of a set $S$ will be denoted by $\sC(S)$.
 The {\em depth} of a set $S$ is the number of iterations of the
 dolmatic dependency map $\f_*$ required to obtain the closure $\sC(S)$ of $S$.
 In other words, the {\em depth} of $S$ is the smallest $d$ such that $\f_*^{d+1}(S) = \f_*^d(S)$.

 A set $S$ {\em spans} $X$ provided its closure is $X$ --- that is, $\sC(S) = X$.
 The little lemma below helps to show that the maximum depth can be
 achieved only by a spanning set.

 \begin{lemma}
 \label{prop}
 If $K$ is closed but not all of $X$, then there are at least 4
 points of $X$ not in $X$ and at least 4 lines not contained in $K$.
 \end{lemma}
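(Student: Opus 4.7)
The plan is to start with any point $p \in X \setminus K$ and exploit the defining property of closedness, namely that a point outside $K$ cannot be dependent on $K$. Consequently, on every line through $p$ there must exist at least one \emph{other} point lying outside $K$. Specializing to the row $R$ and the column $C$ through $p$, this yields a point $q \in R \setminus K$ with $q \neq p$ and a point $r \in C \setminus K$ with $r \neq p$. Since $R \cap C = \{p\}$, the three points $p, q, r$ are pairwise distinct.

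To squeeze out a fourth point, I would repeat the argument at $r$. Let $R'$ denote the row through $r$. Because $r \in C$ and $r \neq p$, the row $R'$ must differ from $R$ (otherwise $r$ would lie in $R \cap C = \{p\}$). Since $r \notin K$ and $K$ is closed, the row $R'$ contains some $s \neq r$ with $s \notin K$. Distinctness of $s$ from $p$ and $q$ follows automatically, for $p, q \in R$ while $s \in R' \neq R$. Hence $\{p,q,r,s\}$ are four distinct points of $X \setminus K$.

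For the four lines, observe that any line meeting $X \setminus K$ fails to be contained in $K$. The row $R$ and column $C$ are two such lines, differing because one is a row and the other a column. The column through $q$ is a third line: it is not contained in $K$ (it contains $q$) and differs from $C$ because $q \neq p$ forces $q$ into a different column. Similarly, the row $R'$ through $r$ is a fourth line, not in $K$ and different from $R$ because $r \neq p$. Thus two rows and two columns furnish the required four lines; the diagonals are not needed at all. The only ``obstacle'' is the distinctness bookkeeping, which rests solely on the elementary fact that each square belongs to exactly one row and exactly one column, so no serious technical difficulty arises.
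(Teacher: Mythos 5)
Your proof is correct and takes essentially the same approach as the paper's: start from a point $p \notin K$, use closedness to find further missing points on the row and column through $p$, apply the argument once more at one of those points to obtain a fourth missing point, and then take the two rows and two columns through these points as the four lines not contained in $K$. The only difference is cosmetic --- you extend via the row of the column-neighbor while the paper extends via the column of the row-neighbor --- and your distinctness bookkeeping matches the paper's.
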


 \begin{proof}
 Suppose $p$ is not in $K$. Since $K$ is closed, $K$ must be missing another point on the row and a point on the column containing $p$.  Say, $r$ and $c$ are also
 missing from $K$ in the row and column of $p$.  Since $r$ is
 missing from $K$, there is another point $q$ missing from its column.
 Clearly $q \ne c$ since $p$ and $r$ are different points in the
 same row and hence lie in different columns.  Thus 4 points are
 missing.  The rows through $p$ and $c$ are not in $K$ as are the
 columns through $p$ and $r$.  These are 4 lines not contained in $K$.
 \end{proof}

 \begin{figure}
 \label{odd}
    \begin{center}
      \begin{tabular}{| c | c || c | c | c | c | c || c | c |}
        \hline
          3 & \bl & \bl & \bl & \bl & \bl & \bl & 4   & \bl \\ \hline
        \bl &   7 & \bl & \bl &  8  & \bl & \bl & \bl & \bl \\ \hline   \hline
        \bl & \bl &  10 & \bl &  9  & \bl & \bl & \bl & \bl \\ \hline
        \bl & \bl &  17 & \bl & \bl & 16  & \bl & \bl & \bl \\ \hline
        \bl & \bl &  18 & \bl & \bl & 17  & \bl & \bl & \bl \\ \hline
        \bl & \bl & \bl & 13  & \bl & \bl & 12  & \bl & \bl \\ \hline
        \bl & \bl &  15 & 14  & \bl & \bl & 11  & \bl & \bl \\ \hline   \hline
        \bl &  6  & \bl & \bl & \bl & \bl & \bl &  5  & \bl \\ \hline
         2  & \bl & \bl & \bl & \bl & \bl & \bl & \bl &  1  \\
        \hline
      \end{tabular}
    \end{center}
    \caption{A $9\times9$ set of maximum depth}
    \label{odd}
\end{figure}

 \begin{theorem}
 \label{mainthm}
 a) The depth of any non-spanning set $S$ on an $n\times n$ board is at most $2n-2$.

 b) The depth of a spanning set $S$ at most $2n$.
 \end{theorem}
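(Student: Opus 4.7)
The plan is to use a unified line-counting argument, with the two parts differing only in an application of Lemma~\ref{prop}. Writing $L(A)$ for the number of lines fully contained in $A$, so that $L(X)=2n+2$, the first step is a simple observation: whenever a new point $p$ appears at step $k$, there is a line $L_p$ through $p$ whose remaining $n-1$ points already lie in $\f_*^{k-1}(S)$; hence $L_p$ becomes newly full at step $k$, and distinct new points $p\ne q$ at the same step force $L_p\ne L_q$ (every other point of $L_p$ lies in the previous state, so it cannot be another new point). Summing over the $d$ productive steps I would conclude
\begin{equation*}
L(\sC(S))-L(S)\;\ge\;\sum_{k=1}^d n_k\;\ge\;d,
\end{equation*}
where $n_k\ge 1$ is the number of new points at step $k$.

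For part (a) I would then invoke Lemma~\ref{prop} directly: a closed proper subset of $X$ misses at least four lines, so $L(\sC(S))\le(2n+2)-4=2n-2$, whence the displayed inequality gives $d\le 2n-2$. This disposes of the non-spanning case.

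For part (b) the naive count only yields $d\le 2n+2-L(S)$, which is off by two, and this is where the real work lies. I would refine the sum by tracking ``bonus'' full lines, counted by $B$: those which become full during some step but arise from two or more new points landing on the same line, rather than as the witnessing line $L_p$ of a single point. Equivalently, $B$ absorbs the slack $\sum_p(\mu(p)-1)$ from points $p$ that happen to complete more than one line at the same step. Then $\sum n_k=(L(\sC(S))-L(S))-B=2n+2-L(S)-B$, so $d\le 2n$ reduces to proving the \textbf{key claim}: every spanning $S$ satisfies $L(S)+B\ge 2$.

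The plan for the key claim is a case analysis on $L(S)$. When $L(S)\ge 2$ it is immediate. When $L(S)\le 1$ at least one diagonal is initially unfilled, and the idea is to extract the missing units of $B$ from the two diagonals, exploiting the fact that each diagonal meets every row and every column in exactly one cell. When a not-yet-full diagonal finally fills, either two new points on the diagonal appear at the same step (so the diagonal itself is a bonus line), or the diagonal is completed by a single point $p$, in which case $p$ is typically simultaneously completable from its row, its column, or the other diagonal, and each such extra line through $p$ contributes to $B$. I would carry out this geometric case analysis carefully across both diagonals and show that a spanning configuration with $L(S)\le 1$ cannot avoid accumulating two total units of $L(S)+B$. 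I expect this combinatorial step to be the main obstacle; once it is established, the displayed refinement immediately gives $d\le 2n$ and completes part (b).
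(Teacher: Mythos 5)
Your part (a) is correct and is essentially the paper's argument: bound the depth by the number of lines contained in the closure (one new full line per productive step, and a line can only be completed once) and then use Lemma \ref{prop} to discard at least four of the $2n+2$ lines when $\sC(S)\ne X$. Your per-step bookkeeping (each new point $p$ has a witness line $L_p$, and distinct simultaneous points have distinct witnesses) is a sound formalization of what the paper states informally.

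Part (b), however, contains a genuine gap: the \textbf{key claim} you reduce everything to, namely $L(S)+B\ge 2$ for every spanning $S$, is false, and the paper's own extremal example refutes it. Take Board 2 of Figure \ref{Bingen} ($n=5$, depth $10=2n$). There $L(S)=0$; at each of steps $1$ through $8$ a single point completes exactly one line; step $9$ adds \emph{two} points (the two squares labelled $9$), each completing exactly one line (row $2$ and column $4$, respectively); and step $10$ adds one point completing row $3$ and column $1$ simultaneously. Hence $B=1$ and $L(S)+B=1<2$, yet the depth is $2n$. The flaw is in the reduction itself: by passing from $d$ to $\sum_k n_k$ you discard the slack $\sum_k(n_k-1)$ contributed by steps that add more than one point, and that slack is precisely where the second saved line can live. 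Since $d=\sum_k n_k-\sum_k(n_k-1)=2n+2-L(S)-B-\sum_k(n_k-1)$, the statement that must be proved is $L(S)+B+\sum_k(n_k-1)\ge 2$, and this is exactly what the paper's ``final rectangle'' argument supplies: if the last step adds a single point $z$, then both the row and the column through $z$ become full at that step (so $B\ge 1$), and since $z$ was not addable one step earlier, at the state two steps from the end the row of $z$ missed a second point $r$ and the column of $z$ missed a second point $c$, with $r\ne c$ because row and column meet only at $z$; both $r$ and $c$ must enter at the penultimate step, so $n_{d-1}\ge 2$. (If the last step adds two or more points, then $n_d-1\ge 1$, and a short dichotomy --- either some line contains two of these last points, giving a bonus line, or each last point completes both its row and its column, giving $B\ge 2$ --- supplies the second unit.) Note also that your plan to extract the missing bonus from the diagonals is a dead end: in Board 2 each diagonal is completed by a single point completing only that diagonal, and diagonals play no essential role in the upper bound --- the forced savings occur at the last point's row and column.
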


 \begin{proof}
 Let $K = \sC(S)$ be the Bingo closure of $S$ and suppose the depth of $S$ is $\la$.
 Each iteration of the dependency map completes at least one line in $K$.  Obviously, once a line
 is completed, it cannot be completed again.  Thus the number of steps required to obtain the closure $K$
 is bounded by the number $\la$ of lines contained in $K$.  That is, $d \le \la$.
 The number of lines in $X$ is $2n+2$, so if $S$ is not spanning, the number of lines in $K$ is at most
 $2n-2$.  Therefore (a) is established, so we can assume $S$ is
 spanning and $K = X$.

 Now consider the last element $z$ added in forming the closure $\sC(S) = X$ of $S$. If $z$ is to be last,
 then it must complete all lines in $X$ through $z$.  There are at least
 two such lines (a row and a column) that are not complete before $z$ is added.
 Thus two lines are used in the same iteration, giving an upper bound of $2n+1$.

 Now, in this case the last element to be added to $S$ comes from two
 lines simultaneously. Hence two elements of $S$ must also be added
 to $S$ in the penultimate iteration. This also uses two lines in the same
 iteration, and so the upper bound becomes $2n$.
 \end{proof}

 Board 2 of Figure \ref{Bingen} shows a spanning set of depth 10 in
 the traditional  Bingo board with $n = 5$.  The spanning set $S$
 consists of those squares marked with black dots.  The numbers in
 the other squares indicate the order in which they are picked up by
 the dolmatic dependency map in forming the closure.

 \begin{figure}
     \begin{center}
      \begin{tabular}{| c | c || c | c | c | c | c | c || c | c |}
        \hline
          3 & \bl & \bl & \bl & \bl & \bl & \bl & \bl & 4   & \bl \\ \hline
        \bl &   7 & \bl & \bl & \bl &  8  & \bl & \bl & \bl & \bl \\ \hline  \hline
        \bl & \bl &  10 & \bl & \bl &  9  & \bl & \bl & \bl & \bl \\ \hline
        \bl & \bl &  19 & \bl & \bl & \bl &  18 & \bl & \bl & \bl \\ \hline
        \bl & \bl &  20 & \bl & \bl & \bl &  19 & \bl & \bl & \bl \\ \hline
        \bl & \bl & \bl & 14  &  15 & \bl & \bl & \bl & \bl & \bl \\ \hline
        \bl & \bl & \bl & 13  & \bl & \bl & \bl & 12  & \bl & \bl \\ \hline
        \bl & \bl &  17 & \bl &  16 & \bl & \bl & 11  & \bl & \bl \\ \hline \hline
        \bl &  6  & \bl & \bl & \bl & \bl & \bl & \bl &  5  & \bl \\ \hline
         2  & \bl & \bl & \bl & \bl & \bl & \bl & \bl & \bl &  1  \\
        \hline
      \end{tabular}
    \end{center}
    \caption{A $10\times10$ set of maximum depth}
      \label{even}
    \end{figure}

 \begin{theorem}
 For $n\geq 5$, the Bingo board of side $n$ has a spanning set  $S$ of depth at least $2n$.
 \end{theorem}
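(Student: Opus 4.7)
The plan is to exhibit, for each $n \ge 5$, an explicit spanning set $S_n \subseteq X$ whose closure process requires exactly $2n$ iterations of $\f_*$. Figures~2 and~3 display such sets for $n = 9$ and $n = 10$; the goal is to describe a pattern that uniformly generalizes these examples to all $n \ge 5$ and to verify that the resulting closure sequence cannot be compressed. Since Theorem~2.2(b) already gives the matching upper bound, this will establish depth exactly $2n$ for the construction.

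I take $S_n$ to be the complement of a set $M_n$ of $2n$ missing squares, decomposed (following Figures~2 and~3) into an \emph{outer} part and an \emph{inner} part. The outer part consists of eight squares near the four corners, designed so that one border column has a unique missing square (a corner) completing at step~1; then the row of that corner has a unique missing square at step~2; and so on, cascading around the boundary over steps $1$ through $8$ in the order column~$n$, row~$n$, column~$1$, row~$1$, column~$n-1$, row~$n-1$, column~$2$, row~$2$. The inner part is a \emph{staircase} of $2n - 8$ missing squares in rows and columns $3$ through $n-2$, arranged so that at each step $i > 8$ exactly one new row--column pair is reduced to a single missing square, and that square is added at step~$i$. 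The final two or three squares are configured so that the last step completes through two distinct lines meeting at a common new point, matching the extremality analysis in the proof of Theorem~2.2(b). The construction differs only slightly between even and odd $n$ at the center of the staircase (where the two diagonals either meet at a single cell or pass through neighbouring cells).

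The verification is a step-by-step induction on the stage~$i$. At each stage one must check two things: (a) the designated square(s) complete at least one line whose other squares are already present, and (b) no other square is added at stage~$i$ --- equivalently, no non-designated line has a unique missing square at that moment. Point~(a) is immediate by construction. Point~(b) is the main obstacle: one must confirm, at every step, that all $2n+2$ lines other than the designated one still contain at least two missing squares. The useful structural observation is that the missing squares split into five nearly-independent \emph{zones} (four corner zones plus the middle staircase), and the only lines linking different zones are the two main diagonals, which in the arrangement above retain many missing squares throughout the early boundary cascade. Once inter-zone interference is ruled out, each zone is a small finite system whose local behaviour mirrors exactly the verified patterns in Figures~2 and~3, so the induction extends cleanly from those two base cases to general $n \ge 5$.
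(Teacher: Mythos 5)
Your overall strategy --- a single explicit family of boards verified by induction on the step index, rather than the paper's induction on $n$ --- could in principle work, but as written the proposal has two concrete defects, one fatal to the stated construction and one sitting exactly where the real work lies. First, the count is wrong: a spanning set of depth $2n$ cannot have only $2n$ missing squares. By the extremality analysis you yourself invoke (part (b) of the upper-bound theorem), the last square $z$ must have both its row and its column complete except for $z$ before the final iteration; since no square other than $z$ lies on both of those lines, the penultimate iteration must add two \emph{distinct} squares. Hence a depth-$2n$ process adds at least $2n+1$ squares, and with $|M_n| = 2n$ your schedule can realize depth at most $2n-1$. The paper's boards conform to this: $11$ missing squares for $n=5$ (the label $9$ appears twice), $19$ for $n=9$ (label $17$ twice), and $21$ for $n=10$ (label $19$ twice).

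Second, the diagonals cannot be treated as mere ``inter-zone interference'' to be ruled out. Since the closure is all of $X$, each diagonal is at some moment reduced to a single missing square, and at the very next iteration that square is forcibly added via the diagonal, whether or not your schedule designates it then. So the diagonals must be woven into the cascade, not quarantined: in the paper's $9\times 9$ board the main diagonal is the \emph{unique} line admitting the square labeled $11$, and the anti-diagonal the unique line admitting the square labeled $16$ --- they are load-bearing steps of the chain, not lines that ``retain many missing squares'' while the action happens elsewhere. A staircase confined to rows and columns $3$ through $n-2$ with unscheduled diagonals will generically see a diagonal collapse off-schedule and compress the cascade, and your closing claim that each zone ``mirrors exactly the verified patterns'' in the figures is precisely the verification that is missing. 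Note also that the paper avoids re-verifying a global pattern for every $n$ by inducting on board size: spiraling new border rows and columns around an already-verified board consumes the new lines in the opening steps and then replays the old schedule with a fixed delay, the even/odd split serving to keep the diagonals of the sub-board aligned with those of the larger board. If you want to keep your uniform, non-inductive description, you must (i) correct the missing-square count to $2n+1$, (ii) specify the staircase explicitly enough that the step at which each diagonal reaches one missing square coincides with its designated step, and (iii) actually carry out the check you label point (b), rather than deferring it to the two base figures.
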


 \begin{proof}
 The proof is by construction of such a set of depth $2n$ for each $n\geq 5$.
 We proceed by induction, constructing the larger cases from smaller ones.
 We must split into the cases of even and odd $n$. The base
 case for the odd construction is given by Board 2 in Figure \ref{Bingen}.
 Larger boards can be obtained by spiraling around the base case as
 shown in Figure \ref{odd}.  The base case is in the $5 \times 5$ rectangle
 enclosed by double lines.  The numbers inside the base case have
 been increased by 8 since those squares appear 8 steps later in the
 closure process.  The 4 new rows and and 4 new columns get used
 first.

 The situation is analogous in the even case, with the base case given in Board S of Figure \ref{small}.
 Figure \ref{even} shows the construction for $n = 10$.  Again the base case is
 in the $6 \times 6$ rectangle enclosed by double lines.
 For larger grids the same spiraling technique can be used.
 \end{proof}

 For $n<5$, the maximum depth is less than $2n$. For $n=1$ and $n=2$, the maximum depth is
 clearly just one. For $n=3$, the maximum depth is 4. This is given
 by the construction below in Board 3 of Figure \ref{small}. This could be seen
 by merely enumerating all possible $3\times 3$ boards.
 Instead however, let us consider again the last piece filled in.

 The last four squares to be filled is
 significant and is the primary technique used in this paper in
 analyzing the maximum depth of small grids. The proof technique of Theorem \ref{mainthm} shows the fact that these last four squares filled in form the corners of a
 rectangle. Hence they will be called the {\em final rectangle}.

 The comments above show that the last piece filled in comes from a
 rectangle of four, and that this rectangle must have depth at most three.
 Now to have depth three there must be a diagonal line which
 is on precisely one of the squares on the rectangle of four.
 No such rectangle exists on a $3\times 3$ board, so the depth of the final rectangle is at most 2. Hence because each
 rectangle lies on both diagonals, there are at least $4$ wasted lines.
 Hence the maximum depth is indeed $2\cdot 3+2-4=4$.

 \begin{figure}
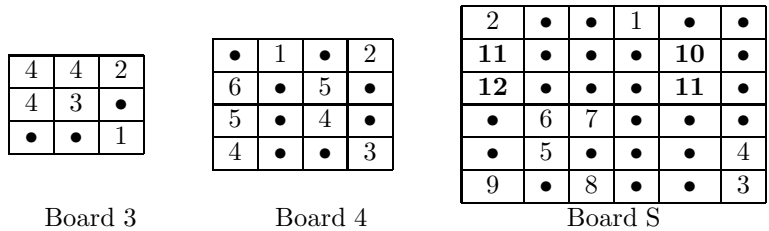

    \begin{center}
    \begin{tabular}{ccc}

      \begin{tabular}{| c | c | }
      \end{tabular}
      \begin{tabular}{| c | c | c | }
        \hline
        4 & 4 & 2 \\ \hline
        4 & 3 & \bl \\ \hline
        \bl & \bl & 1 \\
        \hline
      \end{tabular}

      \hspace{.15 in}
      &
                \begin{tabular}{| c | c | c | c |}
            \hline
            \bl &  1  & \bl & 2   \\ \hline
            6   & \bl & 5   & \bl \\ \hline
            5   & \bl & 4   & \bl \\ \hline
            4   & \bl & \bl & 3   \\
            \hline
          \end{tabular}

        \hspace{.15 in}
        &
              \begin{tabular}{| c | c | c | c | c | c |}
        \hline
        2 & \bl & \bl &  1  & \bl & \bl  \\ \hline
        {\bf 11} & \bl & \bl & \bl &  {\bf 10} & \bl  \\ \hline
        {\bf 12} & \bl & \bl & \bl &  {\bf 11} & \bl  \\ \hline
        \bl & 6  &  7 & \bl & \bl & \bl \\ \hline
        \bl & 5  & \bl & \bl & \bl & 4  \\ \hline
        9 & \bl &  8 & \bl & \bl & 3   \\
        \hline
      \end{tabular}             \\

      Board 3 & Board 4 & Board S

      \end{tabular}
    \end{center}
        \caption{Small cases}
    \label{small}
    \end{figure}

For $n=4$ the analysis is a little more tricky. This is because of the necessary overlap
of the diagonals and the final rectangle. There are several cases to worry about though.

If a rectangle is left empty four squares are left empty and there
are least $5$ (two horizontal, two vertical, and one
diagonal) wasted lines giving a maximum depth of $2\cdot 4 + 2 - 5=5$. If on the
other hand we are to generate all $16$ tiles, then the final
rectangle must overlap either both diagonals. or a single diagonal
twice. If it has has a single diagonal twice, one cannot come onto a
final rectangle via a diagonal and thus at all. This leaves only the
case that it contains both diagonals. Hence one of the squares must
be filled from two lines - one diagonal and one from horizontal or
vertical. This gives an additional two wasted lines: the diagonal, and
whatever line filled the square that used the diagonal. This gives a
maximum depth of $2\cdot 4 + 2 - 2 - 1 - 1=6$. Such a construction
is given below in Board 4 of Figure \ref{small}.


\end{document}